\declaretheoremstyle[headfont=\normalsize\normalfont\bfseries,notefont=\mdseries,
notebraces={(}{)},bodyfont=\normalfont\itshape,postheadspace=0.5em]{italstyle}
\declaretheorem[style=italstyle,name=Theorem]{theorem}
\declaretheorem[style=italstyle,name=Lemma,sibling=theorem]{lemma}
\newcommand{\abs}[1]{\left|#1\right|}
\newcommand{\bd}{\partial}
\newcommand{\C}{\mathbb{C}}
\renewcommand{\d}{\mathrm{d}}
\newcommand{\id}{\mathrm{id}}
\newcommand{\pd}[2]{\frac{\partial #1}{\partial #2}}
\newcommand{\R}{\mathbb{R}}
\newcommand{\set}[1]{\left\{#1\right\}}
\newcommand{\ud}[2]{{\textstyle\frac{d #1}{d #2}}}
\renewcommand\section{\@startsection{section}{1}{0pt}{-3.5ex \@plus -1ex \@minus -.2ex}{2.3ex \@plus.2ex}{\centering\itshape}}
\def\@secnumfont{\normalfont\itshape}
\def\subsection{\@startsection{subsection}{2}%
  \z@{.7\linespacing\@plus\linespacing}{-.5em}%
  {\normalfont\itshape}}
\def\subsubsection{\@startsection{subsubsection}{3}%
  \z@{.5\linespacing\@plus.7\linespacing}{-.5em}%
  {\normalfont\itshape}}
\newcommand{\Z}{\mathbb{Z}}
\title{On the rigidity of translated points}
\author{Dylan Cant}
\author{Jakob Hedicke}
\begin{document}
\begin{abstract}
  We show that there exist contact isotopies of the standard contact sphere whose time-1 maps do not have any translated points which are optimally close to the identity in the Shelukhin-Hofer distance. This proves the sharpness of a theorem of Shelukhin on the existence of translated points for contact isotopies of Liouville fillable contact manifolds with small enough Shelukhin-Hofer norm.
\end{abstract}
\maketitle

\section{Introduction}
\label{sec:introduction}

A translated point is a contact analogue of a fixed point due to Sandon; see \cite{sandon-ann-inst-four-2011,sandon-equivariant-JSG-11,sandon-int-j-math-2012,sandon-geom-dedicata-2013}. Briefly, a \emph{translated point} of a contactomorphism $\psi$, relative a contact form $\alpha$, is an $\alpha$-Reeb chord joining $\Sigma$ to $\psi(\Sigma)$, where $\Sigma=\set{x:(\psi^{*}\alpha)_{x}=\alpha_{x}}$ is the set where the \emph{scaling factor} is 1. Such objects are amenable to study using symplectic techniques such as generating functions, Floer theory, and other variational principles; see, e.g., \cite{givental-nl-maslov-advsov,givental-nl-maslov-LMS,albers-frauenfelder-j-topol-anal-2010,albers_merry,shelukhin_contactomorphism,merry_ulja,granja-karshon-pabiniak-sandon,oh_legendrian_entanglement,oh_shelukhin_2,allais_lens,allais_zoll,allais-arlove,djordjevic_uljarevic_zhang,cant_sh_barcode}.

One should think of translated points as having similar rigidity to Lagrangian intersections or Reeb chords between Legendrians: provided the contactomorphism $\psi$ is close enough to the identity map, translated points of $\psi$ must exist, but they can disappear for sufficiently large deformations. Indeed, the \cite[Theorem B]{shelukhin_contactomorphism} makes this precise:

{\itshape
  If $\psi$ is the time-1 map of a contact isotopy and $\alpha$ is a choice of contact form on the ideal boundary of Liouville manifold, and:
  \begin{equation}\label{eq:condition}
    2\inf_{s\in \R}\mathrm{dist}_{\alpha}(\psi,R_{s}^{\alpha})<\text{minimal length of a closed $\alpha$-Reeb orbit},
  \end{equation}
  then $\psi$ has a translated point relative the contact form $\alpha$.
}

Here the distance is the \emph{Shelukhin-Hofer distance for contactomorphisms} introduced in \cite{shelukhin_contactomorphism}, whose definition is reviewed in \S\ref{sec:shel-hofer-dist}. The left hand side in \eqref{eq:condition} is known as the $\alpha$-\emph{oscillation norm} of $\psi_{1}$.

See also \cite{oh_shelukhin_2} for a proof of this result when $Y$ is not assumed to be Liouville fillable.

Our main result is a non-existence result for translated points, and it proves that Shelukhin's result is optimal in that it cannot be improved without further hypotheses; see \S\ref{sec:factor-two} for further discussion.

\begin{theorem}\label{theorem:main}
  For any $\epsilon>0$, there exist contact isotopies $\psi_{t}$ of $S^{2n+1}$, $n\ge 1$, with its standard contact structure, whose time-1 maps have no translated points relative the standard contact form $\alpha$, and which satisfy:
  \begin{equation*}
    2\inf_{s\in \R}\mathrm{dist}_{\alpha}(\psi_{1},R_{s}^{\alpha})<1+\epsilon;
  \end{equation*}
  note that 1 is the minimal period of a closed Reeb orbit for the standard contact form. 
\end{theorem}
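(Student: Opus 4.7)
The plan is to construct an explicit contact Hamiltonian $h_t$ on $S^{2n+1}$ whose oscillation integral is less than $(1+\epsilon)/2$ and whose time-$1$ map has no translated points. By standard reductions---adding a constant to $h_t$ does not change its oscillation and shifts the time-$1$ map only by a Reeb rotation---this gives $\inf_{s\in\R}\mathrm{dist}_\alpha(\psi_1,R_s^\alpha)\le\int_0^1\mathrm{osc}(h_t)\,dt<(1+\epsilon)/2$.

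The crucial obstacle to avoid is that Reeb-invariant Hamiltonians cannot work: the generated isotopy is strictly contact, preserves the Reeb field, and descends to a Hamiltonian isotopy on $\mathbb{CP}^n$; by the Arnold conjecture on $\mathbb{CP}^n$, the time-$1$ map downstairs has at least $n+1$ fixed points, each lifting to a Hopf fiber of translated points (since the scaling factor of a strict contactomorphism is identically~$1$). Hence the construction uses a genuinely non-Reeb-invariant Hamiltonian, so that the scaling factor $g_1 := \log(\psi_1^*\alpha/\alpha)$ of $\psi_1$ is non-constant; the extra condition $g_1(x) = 0$ at a translated point then provides an additional codimension-one cut to exploit.

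Concretely, I would take $h(z) = \rho(z) + \delta f(z)$, where $\rho(z) = \sum_{j=0}^n(\tfrac{1}{2}+\eta_j)|z_j|^2$ with distinct values $\eta_j$ spread over an interval of length $\tfrac{1}{2}+\eta$ (for small $\eta<\epsilon/4$), and $f$ is a non-Reeb-invariant correction (such as $\mathrm{Re}(z_j^p\bar z_k^q)$ with $p\ne q$), cut off appropriately and scaled so that $\delta\,\mathrm{osc}(f)<\eta$. The unperturbed flow sends $(z_j)\mapsto(-e^{2\pi i\eta_j}z_j)$, whose translated points are exactly the $n+1$ coordinate Hopf fibers $\{z_k=0 \text{ for all } k\ne j\}$, each translated by $\tau_j=\tfrac{1}{2}+\eta_j$. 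The perturbation $\delta f$ displaces each of these fibers off itself and induces a scaling factor $g_1=\delta G+O(\delta^2)$ with $G(x)=\int_0^1 df(R)|_{\phi_t^\rho(x)}\,dt$; the set of would-be translated points of $\psi_1$ becomes a $1$-dimensional \emph{coincidence curve} in $S^{2n+1}$, lying close to the original $n+1$ fibers.

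The main obstacle is verifying that this coincidence curve does not intersect the codimension-one set $\{G=0\}$, so that no translated points survive. Since $H_1(S^{2n+1})=0$ for $n\ge 1$, the coincidence curve is null-homologous, and its algebraic intersection with any codimension-one hypersurface vanishes; a generic choice of the correction $f$ should then remove all geometric intersections, which is the delicate step and will likely require an explicit transversality or direct computation of $G$ along the perturbed curve. The oscillation estimate $\int_0^1\mathrm{osc}(h_t)\,dt\le\mathrm{osc}(\rho)+\delta\,\mathrm{osc}(f)<\tfrac{1}{2}+2\eta<(1+\epsilon)/2$ follows from the parameter choices, completing the proof.
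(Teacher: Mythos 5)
Your construction cannot work as stated, for a reason that the paper itself explains in \S\ref{sec:factor-two}: there is a factor-of-two issue in passing from the oscillation of the Hamiltonian to the Shelukhin--Hofer distance, and you are on the wrong side of it. Since the length functional takes $\max|h_t|$, the optimal constant shift is by the midpoint of $h_t$, giving $\inf_s\mathrm{dist}_\alpha(\psi_1,R_s^\alpha)\le \frac{1}{2}\int_0^1\mathrm{osc}(h_t)\,dt$ (not $\le \int_0^1\mathrm{osc}(h_t)\,dt$ as you wrote). With your proposed $h_t=\rho+\delta f$ you have $\int_0^1\mathrm{osc}(h_t)\,dt<\tfrac{1}{2}+2\eta$, hence the $\alpha$-oscillation norm $2\inf_s\mathrm{dist}_\alpha(\psi_1,R_s^\alpha)<\tfrac{1}{2}+2\eta<1=\rho(\alpha)$. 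By \cite[Theorem~B]{shelukhin_contactomorphism}, which is known to hold for the standard contact sphere, the time-$1$ map of any such isotopy \emph{must} have a translated point. So no choice of the correction $f$ can eliminate them. To have a chance, the quadratic part must have oscillation close to $1$, not $\tfrac{1}{2}$; the construction in the paper delivers $\mathrm{length}_\alpha\to\tfrac{1}{2}$ for an \emph{odd} Hamiltonian, i.e.\ $\int\mathrm{osc}\to 1$, sitting just above the sharp threshold.

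The second gap is the final transversality step, and it is not merely ``delicate'': it is false in general. The vanishing of the algebraic intersection number between the coincidence curve and $\{g_1=0\}$ does not allow you to remove geometric intersections by a generic perturbation. Indeed Shelukhin's theorem is precisely a Floer-theoretic obstruction to doing so whenever the oscillation norm is below $\rho(\alpha)$; it shows that some geometric intersection must persist no matter how $f$ is chosen. The difficulty is global, not a position-general-position issue. The paper circumvents this by a genuinely different route: it uses a ``focusing'' isotopy (generated by the height function $p$, not by a Reeb-invariant quadratic) that \emph{shrinks} the scaling-factor-$1$ set $\Sigma$ to a tiny arc near one point of the sphere, while $\varphi(\Sigma)$ is pushed near the antipode; a strict contact isotopy of small Shelukhin--Hofer norm (via Usher's displacement theorem on $\mathbb{C}P^n$) then moves $\varphi(\Sigma)$ off the Reeb orbit through $\Sigma$. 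Because $\Sigma$ has been made small, the ``no Reeb chords'' condition becomes an elementary separation statement, and the delicate part of the analysis is transferred to controlling the cut-off flow (\S\ref{sec:cut-flow}) rather than to a transversality claim that cannot hold.
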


The construction of $\psi_{t}$ is as a composition $\psi_{t}=\kappa_{t}\gamma_{t}$ where:
\begin{enumerate}
\item $\gamma_{t}$ is a specific focusing\footnote{The construction is related to \cite{cant-JSG-2024} which uses a different a focusing isotopy.} contact isotopy whose scaling-factor-1 set $\Sigma$ concentrates near a single point $\zeta$ on $S^{2n+1}$ and whose image $\gamma_{1}(\Sigma)$ is arbitrarily close to the antipodal point $-\zeta$, and
\item $\kappa_{t}$ is a strict contact isotopy which displaces $-\zeta$ from the Reeb orbit through $\zeta$.
\end{enumerate}
The goal of this paper is to show that this can be done in such a way that $\psi_{t}$ has length at most $(1+\epsilon)/2$, and $\psi_{1}$ has no translated points. It then follows that $\psi_{t}$ has oscillation energy at most $1+\epsilon$, as desired.

\subsection{Shelukhin-Hofer distance}
\label{sec:shel-hofer-dist}

The following pseudo-distance between contactomorphisms (in the universal cover) was introduced in \cite{shelukhin_contactomorphism}; given contact isotopies\footnote{Note that isotopies $\psi_{t}$ are supposed to satisfy $\psi_{0}=\id$.} $\psi_{0,t},\psi_{1,t}$, define:
\begin{equation*}
  \mathrm{dist}_{\alpha}(\psi_{0,t},\psi_{1,t}):=\inf \mathrm{length}_{\alpha}(\psi_{s,1}),
\end{equation*}
where the infimum is over all squares\footnote{The image of such a square appears as a triangle since $\psi_{s,0}=\id$ holds for all $s$.} $\psi_{s,t}$ in the contactomorphism group extending $\psi_{i,t}$, $i=0,1$ and satisfying $\psi_{s,0}=\id$. The length of the path $\psi_{s,1}$ is defined by the formula:
\begin{equation*}
  \mathrm{length}_{\alpha}(\psi_{s,1}):=\int_{0}^{1}\max_{y\in Y}\abs{\alpha_{\psi_{s,1}(y)}(\partial_{s}\psi_{s,1}(y))}\d s;
\end{equation*}
we note that if $\partial_{s}\psi_{s,1}=V_{s}(\psi_{s,1})$, then the maximum in the integrand is also the maximum of the $s$-dependent contact Hamiltonian\footnote{An essential feature of contact geometry is the isomorphism $V\mapsto \alpha(V)$ between contact vector fields $X$ and smooth functions. The function $\alpha(V)$ is called the \emph{contact Hamiltonian}.} $\alpha_{y}(V_{s}(y))$. Because we infimize over squares, but only measure the distance along the top edge of the square, the pseudo-distance should be considered as being defined on the universal cover (considered here as a quotient space of the contact isotopy group). As usual, the Shelukhin-Hofer norm of an isotopy $\psi_{t}$ is its Shelukhin-Hofer distance to the identity.

\subsection{On the factor of two}
\label{sec:factor-two}

There is a difference of conventions between the Hofer norms used in \cite{albers-frauenfelder-j-topol-anal-2010} and \cite{shelukhin_contactomorphism} which has the result of changing certain expressions by a factor of two. With the conventions in \S\ref{sec:shel-hofer-dist}, \cite[Conjecture 31]{shelukhin_contactomorphism} is equivalent to: {\itshape
  if $\psi_{t}$ is a contact isotopy whose $\alpha$-Shelukhin-Hofer oscillation norm is less than $\rho(\alpha)$ then $\psi_{1}$ has translated points.
}
Here we follow \cite{shelukhin_contactomorphism} and denote the minimal length of a closed $\alpha$-Reeb orbit by the symbol $\rho(\alpha)$.

Our construction in Theorem \ref{theorem:main}, and the fact that the above conjecture is known to hold on the standard contact sphere (see \cite{shelukhin_contactomorphism,oh_shelukhin_2,cant_sh_barcode}), proves that this statement of the conjecture is sharp, i.e., cannot be improved without further hypotheses.

\subsection{Acknowledgements}
\label{sec:acknowledgements}

The authors wish to thank Egor Shelukhin for his valuable guidance during the preparation of this paper, and to Yong-Geun Oh for insightful comments on an early draft of the paper. The authors were supported in their research at Universit\'e de Montr\'eal by funding from the Fondation Courtois. The first named author was also supported in his research at the Institut de math\'ematique d'Orsay by funding from the ANR project CoSy.

\section{Construction of the contact isotopy}
\label{sec:construction}

As explained above, the construction of the contact isotopy $\psi_{t}$ is as a composition $\psi_{t}=\kappa_{t}\gamma_{t}$. The contact isotopy $\gamma_{t}$ is a cut-off version of the contact isotopy generated by a height function on the sphere. As we will see below, height functions on spheres generate contact isotopies which are \emph{focusing}, in the sense that they have two fixed points, one of which is attracting and one of which is repelling. The results of \S\ref{sec:comp-cont-vect} and \S\ref{sec:projection-pq-plane} are concerned with the analysis of these contact isotopies. It is important for our argument that we have an explicit description of the scaling-factor-1 set for such contact isotopies.

The contact isotopy generated by a height function has an unbounded Shelukhin-Hofer length, and so it will be necessary to cut-off the isotopy. The bulk of the argument is concerned with the cut-off operation, especially how it affects the scaling-factor-1 set. This part of the paper occupies \S\ref{sec:cut-flow}.

The proof is completed in \S\ref{sec:displacing-set-where} where we construct the displacing isotopy $\kappa_{t}$ and show that $\psi_{t}=\kappa_{t}\gamma_{t}$ has a time-1 map without translated points and a small enough Shelukhin-Hofer length.

\subsection{A special contact vector field on the sphere}
\label{sec:comp-cont-vect}

Consider $\R^{2n+2}$ with coordinates $(p,q,x_{1},y_{1},\dots,x_{n},y_{n})$ and the Liouville form:
\begin{equation*}
  \lambda=\frac{1}{2}(p\d q-q\d p)+\frac{1}{2}\sum_{i=1}^{n}(x_{i}\d y_{i}-y_{i}\d x_{i}).
\end{equation*}
Let $B(r)=\set{\pi\abs{z}^{2}=r}$ be the ball of symplectic capacity $r$. A special role is played by the sphere $\bd B(1)$ since $\lambda$ restricts to the standard contact form whose Reeb flow is 1-periodic. We will consider the following vector fields tangent to $\bd B(1)$:
\begin{equation*}
  \begin{aligned}
    \frac{1}{2\pi}R&=p\pd{}{q}-q\pd{}{p}+\sum_{i=1}^{n}x_{i}\pd{}{y_{i}}-y_{i}\pd{}{x_{i}},\\
    \frac{1}{2\pi}F_{i}&=p\pd{}{x_{i}}-x_{i}\pd{}{p}+y_{i}\pd{}{q}-q\pd{}{y_{i}},\\
    \frac{1}{2\pi}JF_{i}&=p\pd{}{y_{i}}-y_{i}\pd{}{p}+q\pd{}{x_{i}}-x_{i}\pd{}{q},
  \end{aligned}
\end{equation*}
where $J$ is the standard complex structure on $\R^{2n+2}$. Then $R$ is the Reeb vector field and $F_{i},JF_{i}$, $i=1,\dots,n$, span the standard contact distribution (the plane of complex tangencies).

Introduce the vector field:
\begin{equation*}
  V=\frac{1}{2}\sum_{i=1}^{n}(y_{i}F_{i}-x_{i}JF_{i}).
\end{equation*}
This is not a contact vector field, and it is tangent to the contact distribution of $\bd B(1)$.

\begin{lemma}
  The vector field:
  \begin{equation*}
    X=pR+V
  \end{equation*}
  is a contact vector field on $\bd B(r)$; indeed:
  \begin{equation*}
    \d(\lambda(X))+\d\lambda(X,-)=-2\pi q\lambda.
  \end{equation*}
  The restriction of $X$ to $\bd B(1)$ has contact Hamiltonian equal to $p$.
\end{lemma}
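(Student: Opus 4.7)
The plan is to recognize the stated identity as Cartan's magic formula $\mathcal{L}_X\lambda = \d(\iota_X\lambda) + \iota_X\d\lambda = -2\pi q\,\lambda$, which once established on $T\partial B(r)$ exhibits $X$ as a contact vector field on $\partial B(r)$ whose contact Hamiltonian is $\lambda(X)|_{\partial B(r)}$. A preliminary check is that $X$ is tangent to every sphere $\partial B(r)$: $R$ is the diagonal rotation generator on $\C^{n+1}$ and preserves $|z|^2$, and a one-line coordinate check shows each $F_i,JF_i$ annihilates $\d(\pi|z|^2)$ as well, so both $pR$ and $V$ are tangent.

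First I would compute $\lambda(X) = p\lambda(R) + \lambda(V)$. Direct substitution gives $\lambda(R) = \pi|z|^2$, while $\lambda(F_i) = \lambda(JF_i) = 0$ as identities in $\R^{2n+2}$ (they are linear expressions that cancel algebraically), hence $\lambda(V) = 0$. Thus $\lambda(X) = \pi|z|^2\,p$, which restricts to $p$ on $\partial B(1)$, establishing the contact Hamiltonian claim.

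For the 1-form identity I would compute $\iota_X\d\lambda = p\,\iota_R\d\lambda + \iota_V\d\lambda$. The Reeb preserves $\lambda$, so Cartan gives $\iota_R\d\lambda = -\d(\pi|z|^2)$, and the term $p\,\iota_R\d\lambda$ cancels against the derivative of $\pi|z|^2$ inside $\d(\lambda(X)) = \d(\pi|z|^2\,p)$. The remaining piece $\iota_V\d\lambda$ is a routine coordinate expansion from $\d\lambda = \d p\wedge\d q + \sum \d x_i\wedge\d y_i$ together with the explicit components of $V$, after which $\mathcal{L}_X\lambda$ simplifies to
\begin{equation*}
  \mathcal{L}_X\lambda = \pi(p^2+q^2)\,\d p + \pi\sum_i\bigl[(p x_i + q y_i)\,\d x_i + (p y_i - q x_i)\,\d y_i\bigr].
\end{equation*}
Expanding $-2\pi q\,\lambda$ and subtracting, the difference collapses to $\pi p\bigl(p\,\d p + q\,\d q + \sum_i(x_i\,\d x_i + y_i\,\d y_i)\bigr) = \tfrac{p}{2}\,\d(\pi|z|^2)$, which vanishes on $T\partial B(r)$ since $\pi|z|^2$ is constant on that sphere, proving the identity. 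The only real obstacle is keeping the bookkeeping straight in $\iota_V\d\lambda$, where the cross-terms from the $F_i$- and $JF_i$-components of $V$ combine to give the symmetric expression above; no conceptual difficulty arises.
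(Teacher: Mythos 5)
Your argument is correct, and the underlying computation is the same as the paper's: expand the two sides of Cartan's formula in the ambient coordinates and reduce modulo the sphere constraints. The organizational choices differ modestly. The paper restricts to $\partial B(1)$ from the outset (justified by scaling equivariance), works in $\R^4$, and silently drops the term $\iota_{pR}\d\lambda$ from its displayed formula for $\d\lambda(X,-)$ because it is a multiple of $\d(\pi|z|^2)$ and hence vanishes on $T\partial B(1)$; it then uses the equality $\pi|z|^2=1$ once and the differential constraint $p\,\d p+q\,\d q+x\,\d x+y\,\d y=0$ once to massage the remainder into $-2\pi q\lambda$. You instead work in $\R^{2n+2}$ for all $n$ and all radii at once, make the cancellation between $p\,\iota_R\d\lambda=-p\,\d(\pi|z|^2)$ and the corresponding piece of $\d(\pi|z|^2 p)$ explicit via the Reeb invariance $\mathcal{L}_R\lambda=0$, and exhibit the residual $\mathcal{L}_X\lambda+2\pi q\lambda$ as $\tfrac{p}{2}\,\d(\pi|z|^2)$ before restricting. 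This buys you a cleaner bookkeeping (no appeal to equivariance, the sphere constraint used exactly once at the end) at the cost of carrying the full $\iota_V\d\lambda$ expansion in $2n+2$ variables, which you correctly identify as the only nontrivial bookkeeping; the final collapsed expression you wrote is exactly right. The remaining pieces — $\lambda(R)=\pi|z|^2$, $\lambda(F_i)=\lambda(JF_i)=0$ hence $\lambda(X)=\pi|z|^2 p$, and tangency of $X$ to each $\partial B(r)$ — are all correct and verify the contact-Hamiltonian claim.
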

We recall that the contact Hamiltonian of a contact vector field $X$ on $(Y,\alpha)$ is simply the function $\alpha(X)$. In our case, $Y=\bd B(1)$ and the contact form is the pullback $\alpha=\lambda|_{\bd B(1)}$.

\begin{proof}
  By equivariance under rescaling, it suffices to prove the lemma on the sphere $\bd B(1)$ bounding the ball of capacity 1. In this case, $\lambda(X)=p$, and hence one needs to show that $\d p+\d\lambda(X,-)=-2\pi q\lambda$ holds on $\bd B(1)$. We do the computation in $\R^{4}$, although the general case follows the same exact computation. Alternatively, one can think of $x,y$ as being vector valued coordinates.

  One computes:
  \begin{equation*}
    \d\lambda(X,-)=\pi(py\d y-y^{2}\d p+qy\d x+px\d x-qx\d y-x^{2}\d p).
  \end{equation*}
  Using $\pi(p^{2}+q^{2}+x^{2}+y^{2})=1$ we have:
  \begin{equation*}
    \d p+\d\lambda(X,-)=\pi(py\d y+qy\d x+px\d x-qx\d y+p^{2}\d p+q^{2}\d p).
  \end{equation*}
  Finally, using $p\d p+q\d q+y\d y+x\d x=0$ we have:
  \begin{equation*}
    \d p+\d\lambda(X,-)=-\pi q(p\d q-q\d p+x\d y-y\d x)=-2\pi q\lambda.
  \end{equation*}
  Thus the desired formula holds.
\end{proof}

\subsection{Projection to the $p,q$ plane}
\label{sec:projection-pq-plane}

We will abuse notation and refer to vector fields on $D(1)=\set{\pi(p^{2}+q^{2})\le 1}$ by the symbols:
\begin{equation*}
  \left\{
    \begin{aligned}
      R&=2\pi(p\bd_{q}-q\bd_{p}),\\
      V&=(1-\pi p^{2}-\pi q^{2})\bd_{q},\\
      X&=pR+V;
    \end{aligned}
  \right.
\end{equation*}
this is justified because $R,V,X$ are related\footnote{Here we recall that $V_{1}$ is related to $V_{2}$ under a map $f$ if $V_{2}\circ f=\d f\circ V_{1}$. If this holds, then $f$ maps integral curves of $V_{1}$ to integral curves of $V_{2}$.} to the vector fields considered in \S\ref{sec:comp-cont-vect} under the projection $\bd B(1)\to D(1)$. 

\subsubsection{An integral estimate}
\label{sec:an-integral-estimate}

One computes:
\begin{equation}\label{eq:flow-projection}
  \left\{
    \begin{aligned}
      \d p(X)&=-2\pi pq,\\
      \d q(X)&=1+\pi (p^{2}-q^{2}),
    \end{aligned}
  \right.
\end{equation}
so that the vertical line $p=0$ is an invariant set. We will give an exact solution of this flow in \S\ref{sec:exact-solution}. It follows from \eqref{eq:flow-projection} that:
\begin{equation}\label{eq:comparison-d-theta}
  \frac{p\d q(X)-q\d p(X)}{2\pi(p^{2}+q^{2})}=\frac{p(1+\pi (p^{2}+q^{2}))}{2\pi(p^{2}+q^{2})}.
\end{equation}
The left hand side is $\frac{1}{2\pi}\d\theta(X)$ where $\d\theta$ is the closed differential form on $\C^{\times}$ which measures winding angles around zero, and hence, for any flow line $z:\R\to \set{p>0}$ for the vector field $X$, we have:
\begin{equation*}
  \int_{-\infty}^{\infty} p(z(t))\le \frac{1}{2\pi}\int_{-\infty}^{\infty} \d\theta_{z(t)}(X(z(t)))d t\le \frac{1}{2}.
\end{equation*}
We have used $(1+\tau)/(2\tau)\ge 1$ for $\tau=\pi (p^{2}+q^{2})$, \eqref{eq:comparison-d-theta}, and that the integral of $z^{*}\d\theta$ is the total angle traced out by the flow line which is bounded from above by $\pi$.

\emph{Remark.} Since $p$ is the contact Hamiltonian for the contact vector field $X$, the above estimate is related to the Shelukhin-Hofer length of the isotopy defined in \S\ref{sec:shel-hofer-dist}. We will use this result in \S\ref{sec:definition-cut-flow}.

\subsubsection{Exact solution of the flow}
\label{sec:exact-solution}

If one sets $z=p+iq$, then the flow lines of $X$ satisfy:
\begin{equation}\label{eq:complex-ODE}
  z'=i\pi z^{2}+i,
\end{equation}
whose time $t$ flow is a biholomorphism of the disk. Moreover, if one introduces a new coordinate $w\in \R\times [0,\pi]$, and uses the following biholomorphism from the strip to the disk:
\begin{equation}\label{eq:biholo-strip-to-disk}
  z=\frac{ie^{w}+1}{\sqrt{\pi}(e^{w}+i)},
\end{equation}
then a computation yields:
\begin{equation*}
  z'=\frac{-2e^{w}}{\sqrt{\pi}(e^{w}+i)^{2}}w'\text{ and }i\pi z^{2}+i=\frac{-4e^{w}}{(e^{w}+i)^{2}}.
\end{equation*}
In particular, the ODE \eqref{eq:complex-ODE} reduces to the ODE $w'=2\sqrt{\pi}$.

\subsubsection{Determining where the scaling factor is 1}
\label{sec:scaling-factor-original}

This description of the flow gives a nice characterization of the set where the scaling exponent $g$ for the time $T$ flow $\varphi_{T}$ is zero (here $\varphi_{T}^{*}\alpha=e^{g}\alpha$). This set is the inverse image of its projection to the $p,q$ plane since:
\begin{equation*}
  g(p,q,x,y)=-2\pi\int_{0}^{T} q\circ \varphi_{t}(p,q,x,y)\d t,
\end{equation*}
and $q\circ \varphi_{t}(p,q,x,y)$ can be determined using the solution to \eqref{eq:complex-ODE}. Let us denote this projection by $C$. Writing $z=p+iq$, and $w=a+ib$ we have:
\begin{equation*}
  \sqrt{\pi}z=\frac{ie^{w+\bar{w}}+e^{w}+e^{\bar{w}}-i}{e^{w+\bar{w}}+1+e^{\bar{w}}i-e^{w}i}\implies \sqrt{\pi}q=\frac{e^{2a}-1}{e^{2a}+1+2e^{a}\sin(b)}.
\end{equation*}

In particular, we conclude that:
\begin{equation*}
  z(a+ib)\in C\iff \int_{a}^{a+2\sqrt{\pi}T}\frac{e^{2\tau}-1}{e^{2\tau}+1+2e^{\tau}\sin(b)}\d \tau=0.
\end{equation*}

This integral can be solved explicitly since:
\begin{equation*}
  \frac{d}{d\tau}(\ln(2\sin(b)e^{\tau}+e^{2\tau}+1)-\tau)=\frac{e^{2\tau}-1}{e^{2\tau}+1+2e^{\tau}\sin(b)};
\end{equation*}
with a small computation one obtains:
$$C=\set{ z=z(a+ib): a=-\sqrt{\pi}T\text{ and } b\in [0,\pi] }.$$
Since $w'=2\sqrt{\pi}$, one sees that $\varphi_{t}(C)$ is always a circular arc orthogonal to the boundary, $\varphi_{T/2}(C)$ is the horizontal line $q=0$, and $\varphi_{T}(C)$ is the reflection of $C$ under the line $q=0$.

\begin{figure}[h]
  \centering
  \includegraphics[width=1.5in]{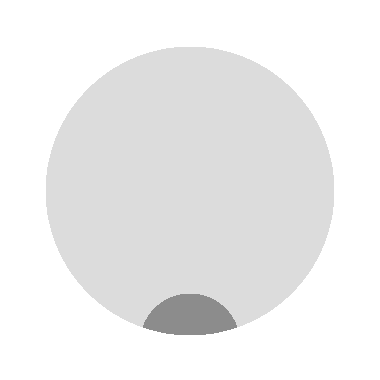}
  \caption{Computer simulation of the regions separated by $C$. One sees $C$ is a circular arc orthogonal to the boundary of the disk.}
\end{figure}

\subsection{The cut-off flow}
\label{sec:cut-flow}

For any function $f:\R\to \R$, introduce the cut-off version:
\begin{equation*}
  X_{f}:=f(p)R+f'(p)V.
\end{equation*}
Then $X_{f}$ is the projection to $D(1)$ of a contact vector field on $\bd B(1)$.

\subsubsection{Equivariance under reflection}
\label{sec:equiv-under-refl}

The flow of $f(p)R+f'(p)V$ is equivariant with respect to the reflection through the line $p=0$ provided that $f(-p)=-f(p)$.

For this reason we will always assume that $f$ is an odd function such that $f(p)=p$ holds for $p$ near $0$; when we define $f:[0,\pi^{-1/2}]\to \R$ we implicitly extend $f$ to be an odd function on $[-\pi^{-1/2},\pi^{1/2}]$.

Thus it is sufficient to analyze the dynamics in the right half $\set{p\ge 0}$. We will therefore assume that all trajectories lie in $\set{p\ge 0}$ unless otherwise stated.

\subsubsection{The Shelukhin-Hofer length}
\label{sec:bound-shel-hofer}

Our construction outlined in \S\ref{sec:construction} is a time-dependent contact isotopy generated by:
\begin{equation*}
  X_{t}=f_{t}(p)R+f_{t}'(p)V,
\end{equation*}
where $f_{t}$ is a family of odd functions, for $t\in [0,T]$. It is clear that the Shelukhin-Hofer length of this isotopy can be computed exactly as:
\begin{equation*}
  \text{length}_{\alpha}=\int_{0}^{T}\max f_{t}(p)dt.
\end{equation*}

\subsubsection{Discontinuous degeneration of the cut-off vector field}
\label{sec:disc-degen-cut}

For a given number $\eta\in [0,\pi^{-1/2}]$, consider:
\begin{equation*}
  f_{\delta,\eta}(p)=\eta+\delta-\delta\mu(\delta^{-1}(\eta-p+\delta)),
\end{equation*}
where $\mu$ is a convex function so that $\mu(x)=x$ for $x\ge 1$ and $\mu(x)=1/2$ for $x\le 0$.

\begin{figure}[h]
  \centering
  \begin{tikzpicture}[scale=1.7]
    \draw[dashed] (0.5,0.5)--(0.5,-.75) node[below] {$p=\eta$} (-0.5,-0.5)--(-0.5,-.75) node[below] {$p=-\eta$};
    \draw[line width=.8pt] (-2,-0.5)--(-0.5,-0.5)--(0.5,0.5)--(2,0.5);
  \end{tikzpicture}
  \caption{The function $f_{\delta,\eta}$ is a smoothed out version of the above piecewise linear function.}
\end{figure}
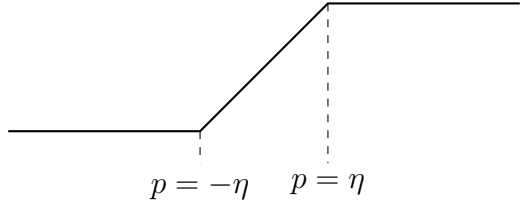

By construction we have (always assuming $p\ge 0$):
\begin{equation}\label{eq:pw-fdelta}
  f_{\delta,\eta}(p)=\left\{
    \begin{aligned}
      &p&&\text{for }p\le \eta,\\
      &\eta+\delta/2&&\text{for }p\ge \eta+\delta,
    \end{aligned}
  \right.
\end{equation}
Notice that $\lim_{\delta\to 0}X_{f_{\delta,\eta}}$ converges pointwise to:
\begin{equation*}
  X_{\eta}=\left\{
    \begin{aligned}
      &pR+V&&\text{ for }p\le \eta,\\
      &\eta R&&\text{ for }p>\eta,
    \end{aligned}
  \right.
\end{equation*}
This limit vector field is not continuous; however, it is continuous along the boundary, since $V$ vanishes on the boundary.

\subsubsection{The time-dependent cut-off flow}
\label{sec:definition-cut-flow}

Fix a sequence $\delta_{n}\to 0$, and let $z:[0,T]\to \bd D(1)$ be an integral curve for the flow of $X=pR+V$, in the region where $p>0$. Let $\eta(t)=p(z(t))$. The time dependent contact Hamiltonian $f_{n,t}(p)=f_{\delta_{n},\eta(t)}(p)$ defines a sequence of contact vector fields $X_{n,t}$ as in \S\ref{sec:disc-degen-cut}. Let $\varphi_{n,t}$ be the resulting time-dependent isotopy (not to be confused with the original flow $\varphi_{t}$).

The length of $\varphi_{n,t}$ is bounded by:
\begin{equation*}
  \text{length}_{\alpha}(\varphi_{n,t})\le \frac{1+\delta_{n} T}{2},
\end{equation*}
where we have used the bound on the length in \S\ref{sec:bound-shel-hofer}, and the fact:
\begin{equation*}
  \max f_{\delta_{n},\eta(t)}(p)=\eta(t)+\delta_{n}/2,
\end{equation*}
together with the integral estimate from \S\ref{sec:an-integral-estimate}:
\begin{equation*}
  \int_{0}^{T}\eta(t)dt=\int_{0}^{T}p(z(t))dt\le \frac{1}{2}.
\end{equation*}
It is very important that the length of $\varphi_{n,t}$ converges to $1/2$ as $n\to\infty$.

As in \S\ref{sec:disc-degen-cut}, we will also refer to the pointwise limit: $$X_{t}=\lim_{n\to\infty} X_{n,t}.$$
The limit is uniform on compact subsets of $\set{(t,u):p(u)\ne \eta(t)}$.

\subsubsection{Set of piecewise trajectories}
\label{sec:piecewise-trajectories}

Introduce the set $\mathscr{M}(z)$ of continuous maps $u:[0,T]\to D(1)\cap \set{p\ge 0}$ so that:
\begin{enumerate}[label=(M\arabic*)]
\item\label{M1} $p(u(t))=\eta(t)$ holds for finitely many values of $t$, or $u(t)=z(t)$ holds identically,
\item\label{M2} on the open set where $p(u(t))\ne \eta(t)$, $u$ is a flow line for the limit flow $X_{t}$,
\end{enumerate}
where $z:[0,T]\to \bd D(1)$ is a flow line as in \S\ref{sec:definition-cut-flow}. It is clear that $u$ is smooth on the open set in \ref{M2}. Let us call the finite set of times where $p(u(t))=\eta(t)$ \emph{crossing times}.

\begin{lemma}
  Each $u\in \mathscr{M}(z)$, $u\ne z$, has either zero or one crossing time.
\end{lemma}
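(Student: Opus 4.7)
My plan is to argue by contradiction. Assume $u\in\mathscr{M}(z)$ with $u\ne z$ has two crossing times; by \ref{M1} the crossing set is finite, so I may pick two consecutive crossings $t_0<t_1$. On $(t_0,t_1)$ the continuous function $p(u(t))-\eta(t)$ is nowhere zero and so has constant sign, which means $u$ lies entirely in the inner region $\set{p<\eta}$ or entirely in the outer region $\set{p>\eta}$ on that interval. I will rule out both.

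For the outer case, the key observation is that $V$ vanishes on $\bd D(1)$, so $X=pR=\eta R$ along $z$; thus both $u$ and $z$ solve the same complex ODE $\dot w=2\pi i\eta(t)w$ on $[t_0,t_1]$ (identifying $R$ with multiplication by $2\pi i$). Hence the ratio $\kappa=u/z$ is constant on $[t_0,t_1]$. Writing $\kappa=\kappa_1+i\kappa_2$, the crossing condition $p(\kappa z)=p(z)$ becomes the real-linear relation $(\kappa_1-1)p(z)=\kappa_2 q(z)$. If $\kappa_2=0$, then $p(z)=\eta>0$ forces $\kappa=1$ and $u=z$, contradicting $u\ne z$. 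If $\kappa_2\ne 0$, the condition at both crossing times pins $z(t_0)$ and $z(t_1)$ to a common line through the origin; since $\bd D(1)\cap\set{p>0}$ meets any such line at most once, this contradicts the strictly monotone motion of $z$ on that half-circle.

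For the inner case, both $u$ and $z$ integrate $X$ on $[t_0,t_1]$. Using the biholomorphism \eqref{eq:biholo-strip-to-disk}, I would parameterize $z\leftrightarrow(a_z(t),0)$ (since $z\in\bd D(1)\cap\set{p>0}$) and $u\leftrightarrow(a_z(t)+\log\lambda,b_u)$ with constant $\lambda>0$ and $b_u\in[0,\pi/2]$ (so that $p(u)\ge 0$); both $a$-coordinates advance at speed $2\sqrt\pi$. Substituting the formula for $p$ from \S\ref{sec:scaling-factor-original} into $p(u(t))=p(z(t))$ and clearing denominators yields a quadratic in $s:=e^{a_z(t)}$:
\begin{equation*}
(\lambda^2-\lambda\cos b_u)\,s^2+2\lambda\sin b_u\cdot s+(1-\lambda\cos b_u)=0.
\end{equation*}
The main obstacle will be to show that this quadratic has at most one positive root unless $u=z$: otherwise $s(t_0)<s(t_1)$ would provide two distinct positive roots. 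The case $b_u=0$ factors as $(\lambda-1)(\lambda s^2-1)=0$, giving either $u=z$ or a single positive root $s=1/\sqrt\lambda$. For $b_u\in(0,\pi/2]$, Vieta's formulas give sum $-2\sin b_u/(\lambda-\cos b_u)$ and product $(1-\lambda\cos b_u)/[\lambda(\lambda-\cos b_u)]$; the sum is positive only when $\lambda<\cos b_u\le 1$, which forces $1-\lambda\cos b_u>0$ and $\lambda(\lambda-\cos b_u)<0$, hence the product is negative and the two roots have opposite signs. In either subcase at most one root is positive, completing the contradiction.
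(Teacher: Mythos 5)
Your proof is correct, but it takes a genuinely different route from the paper's. The paper classifies crossing times into ``entrances'' ($z(t_*)$ in the lower half-circle) and ``exits'' ($z(t_*)$ in the upper half-circle) via the sign of the two-sided derivative of $p(u(t))-\eta(t)$, rules out entrance--entrance and exit--exit by that sign analysis, and rules out mixed pairs by a topological barrier argument: the orthogonal circular arcs $C(t)$ are preserved by the $pR+V$ flow, so a trajectory starting above $C(0)$ can enter but never exit (and vice versa). You instead reduce directly to algebra: between two consecutive crossings $u$ lies in one of the two constant regimes; in the outer regime $u$ and $z$ solve the same time-dependent linear ODE $\dot w=2\pi i\eta(t)w$, so $u/z$ is constant, and the two crossing conditions force $z(t_0),z(t_1)$ onto a common line through the origin, which the injective boundary trajectory cannot hit twice; in the inner regime you pass to the strip coordinate from \S\ref{sec:exact-solution}, where $u$ and $z$ differ by a constant horizontal shift $\log\lambda$ and a constant vertical coordinate $b_u$, and the crossing condition becomes a quadratic in $s=e^{a_z(t)}$ which you show via Vieta has at most one positive root unless $u=z$. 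Both arguments are sound; yours is more computational and leans on the explicit biholomorphism that makes the flow linear, whereas the paper's argument is soft (only needs that the arcs $C(t)$ are flow-invariant) and would transfer more readily to situations without an exact solution.

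Two very minor points worth tightening if you were to write this up fully. First, when $\kappa_2=0$ (resp.\ $b_u=0,\lambda=1$) you conclude $u=z$ on $[t_0,t_1]$; to get the global contradiction $u\ne z$ you should invoke the uniqueness-of-ODE-solutions argument (as the paper does at the outset) to propagate $u=z$ from $[t_0,t_1]$ to all of $[0,T]$. Second, in the inner case the Vieta computation assumes the leading coefficient $\lambda(\lambda-\cos b_u)$ is nonzero; when it vanishes the equation is linear and the sole root $s=-\sin^2 b_u/(2\lambda\sin b_u)$ is negative, so the conclusion still holds, but the degenerate case should be flagged since Vieta does not apply to it.
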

\begin{proof}
  If $u(t_{*})=z(t_{*})$ holds for some time $t_{*}$ then since $u(t)$ and $z(t)$ solve the same ODE on $[t_{*},t_{*}+\delta)$ and on $(t_{*}-\delta,t_{*}]$, by \ref{M2}, we can conclude by uniqueness that $u(t)=z(t)$ holds identically. Thus we may suppose that $u(t)\ne z(t)$ for all times $t$.
  
  Suppose there is a time $t_{\ast}$ so that $p(u(t_{\ast}))=\eta(t_{\ast})$. One computes a two-sided derivative:
  \begin{equation*}
    \pd{}{t}\bigg|_{t=t_{\ast}}p(u(t))-\eta(t)=2\pi \eta(t_{\ast})(q(z(t_{\ast}))-q(u(t_{\ast}))).
  \end{equation*}
  In particular, if $z(t_{\ast})$ is on the lower half of the circle $\set{q<0}$, then the difference $q(z(t))-q(u(t))$ is negative and so the relative $p$-coordinate is decreasing; this means that $u(t)$ lies where $p(u(t))<\eta(t)$ for $t$ slightly larger than $t_{\ast}$. Such crossings are shown on the left of Figure \ref{fig:crossings}; let us call such crossings \emph{entrances}. The other type of crossing, called an \emph{exit}, is when $z(t_{\ast})$ is on the upper half of the circle $\set{q>0}$ and the same argument shows that $p(u(t))>\eta(t)$ for $t$ slightly larger than $t_{\ast}$; see the right side of Figure \ref{fig:crossings}.
  
  It therefore follows that one cannot have an entrance followed by an entrance, or an exit followed by an exit.

  A topological argument shows that one cannot have an exit followed by an entrance or vice-versa. One considers the circular arc $C(t)$ passing through $z(t)$ orthogonal to $\bd D(1)$; see the dashed arc in Figure \ref{fig:crossings}. By the dynamics of $pR+V$, each flow line of $X_{\eta}$ which starts on $C(0)$ remains on $C(t)$ for all times. A flow line which does not begin on $C(0)$ can never cross $C(t)$; thus if $u$ starts on top of $C(0)$, then $u$ can enter but never exit, while if $u$ starts below $C(0)$ then $u$ can exit but never enter. Thus we have proved there is at most crossing time; we note that it is possible for a trajectory to never enter or exit.
\end{proof}

\begin{figure}[h]
  \centering
  \begin{tikzpicture}[scale=1.5]
    \draw (0,-1)arc(-90:90:1)--cycle;
    \draw (-70:1)node[fill,circle,inner sep=1pt]{}node[below]{$z$}--node[pos=0.4,fill,circle,inner sep=1pt]{}node[pos=0.4,right]{$u$}(70:1);
    \draw[dashed] (0,{-1/cos(20)}) + (90:{tan(20)}) arc (90:20:{tan(20)});

    \begin{scope}[shift={(3,0)}]
      \draw (0,-1)arc(-90:90:1)--cycle;
      \draw (-60:1)--node[pos=0.6,fill,circle,inner sep=1pt]{}node[pos=0.6,right]{$u$}(60:1)node[fill,circle,inner sep=1pt]{}node[above]{$z$};
      \draw[dashed] (0,{1/cos(30)}) + (-90:{tan(30)}) arc (-90:-30:{tan(30)});
    \end{scope}
  \end{tikzpicture}
  \caption{In the scenario on the right, $u$ enters the region where the flow in $pR+V$; on the left, $u$ exits this region. The dashed arc is denoted by $C(t)$.}
  \label{fig:crossings}
\end{figure}
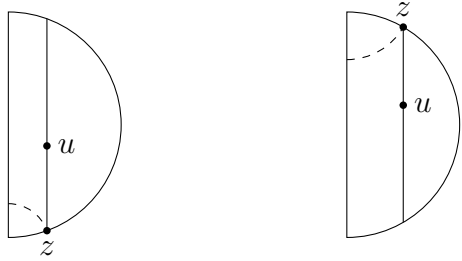

\subsubsection{Scaling factor for the piecewise flow}
\label{sec:scal-fact-piec}

Let $z:[0,T]\to \bd D(1)$ be a boundary trajectory of $X=pR+V$, as above. For each $u\in \mathscr{M}(z)$, introduce the \emph{scaling exponent}:
\begin{equation*}
  g(u)=-2\pi\int_{t_{1}}^{t_{2}}q(u(t))dt,
\end{equation*}
where $[t_{1},t_{2}]\subset [0,T]$ is the maximal interval so that $p(u(t))\le \eta(t)$ holds for all $t\in [t_{1},t_{2}]$.

Let us now suppose that $q(z(T/2))=0$. By the analysis in \S\ref{sec:scaling-factor-original}, it follows that $g(z)=0$; moreover, $g(u)=0$ holds for every trajectory $u$ so that $u(t)\in C(t)$ for all times $t$. The converse also holds:

\begin{lemma}
  In the above setting, $g(u)=0$ if and only if $u(t)$ lies on the circular arc orthogonal to $\bd D(1)$ through $z(t)$.
\end{lemma}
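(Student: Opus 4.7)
First, I would pass to the strip coordinates $w = a + ib \in \R \times [0,\pi]$ from \S\ref{sec:exact-solution}, under which the flow of $pR+V$ becomes horizontal translation $w \mapsto w + 2\sqrt{\pi}\,t$, and $C(t)$ is the image of the vertical line $\{a = a_z(t)\}$ with $a_z(t) := -\sqrt{\pi}T + 2\sqrt{\pi}\,t$ (using $q(z(T/2))=0$, which in strip coordinates forces $a_z(T/2)=0$). Combining the previous lemma with the observation that $u$ cannot remain in the outer region $\{p(u) > \eta\}$ throughout $[0,T]$ (at $t=T/2$, $\eta(T/2) = \pi^{-1/2}$ saturates the disk bound on $p$), there are three scenarios for $u \neq z$: (i) no crossings, so $[t_1,t_2]=[0,T]$; (ii) a single entrance at $t_* \in (0,T/2)$, so $[t_1,t_2]=[t_*,T]$; (iii) a single exit at $t_* \in (T/2,T)$, so $[t_1,t_2]=[0,t_*]$. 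On $[t_1,t_2]$, the trajectory $u$ is a horizontal flow line in the strip, with constant $b_u \in [0,\pi/2]$ and $a_u(t) = a_u(t_1) + 2\sqrt{\pi}(t-t_1)$.

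Next, I would substitute $\tau = a_u(t)$ in the integral defining $g(u)$; using the formula for $\sqrt{\pi}\,q(u)$ from \S\ref{sec:scaling-factor-original} and the factorisation $e^{2\tau}+2\sin b_u\cdot e^\tau+1 = 2e^\tau(\cosh\tau + \sin b_u)$, this yields
\[
g(u) = h(a_u(t_1)) - h(a_u(t_2)), \qquad h(\tau) := \ln\bigl(2(\cosh\tau + \sin b_u)\bigr).
\]
The function $h$ is manifestly even with a strict minimum at $\tau = 0$, so $g(u) = 0$ is equivalent to the symmetry condition $a_u(t_1) + a_u(t_2) = 0$.

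In scenario (i) this condition reads $a_u(0) = -\sqrt{\pi}T = a_z(0)$, and since both $a_u$ and $a_z$ translate at rate $2\sqrt{\pi}$, it is equivalent to $u(t) \in C(t)$ for all $t$; this gives the equivalence in the non-crossing case. The main obstacle is ruling out scenarios (ii) and (iii). In (ii), $g(u) = 0$ would force $a_u(t_*) = -\sqrt{\pi}(T - t_*)$, while the crossing condition imposes $p(u(t_*)) = \eta(t_*)$. I would then argue that for $b \in [0, \pi/2]$ the expression $\sqrt{\pi}\,p = 2e^a\cos b/(e^{2a}+2e^a\sin b + 1)$ is maximised in $b$ at $b = 0$ with value $1/\cosh(a)$; hence $\sqrt{\pi}\,p(u(t_*)) \le 1/\cosh(a_u(t_*))$. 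Since $\sqrt{\pi}\,\eta(t_*) = 1/\cosh(a_z(t_*))$ and $\abs{a_u(t_*)} = \sqrt{\pi}(T-t_*) > \sqrt{\pi}(T-2t_*) = \abs{a_z(t_*)}$ for $t_* \in (0, T/2)$, the resulting inequality $p(u(t_*)) < \eta(t_*)$ contradicts the crossing condition. Scenario (iii) is handled identically via the time-reversal symmetry $t \mapsto T - t$. Therefore $g(u) = 0$ holds precisely when $u(t) \in C(t)$ for all $t$.
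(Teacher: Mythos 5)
Your proof is correct, but it takes a genuinely different route from the paper's. The paper argues by \emph{comparison}: in the two crossing scenarios it introduces an auxiliary trajectory $v$ starting on $C(0)$ (so $g(v)$ has a definite sign on the relevant sub-interval) and shows $g(u)$ is strictly less (resp.\ greater) than the corresponding partial integral over $v$, hence nonzero; the non-crossing scenario is dispatched by citing \S\ref{sec:scaling-factor-original} directly. You instead compute $g(u)$ in closed form via the antiderivative
\[
h(\tau)=\ln\bigl(2(\cosh\tau+\sin b_u)\bigr),
\]
reducing $g(u)=0$ to the symmetry condition $a_u(t_1)+a_u(t_2)=0$ on the strip endpoints, and then rule out the entrance/exit scenarios by the elementary inequality $\sqrt{\pi}\,p\le 1/\cosh a$ combined with $|a_u(t_*)|>|a_z(t_*)|$. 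Your version is more explicit and self-contained: it avoids the slightly delicate construction of the comparison trajectory $v$ and makes the ``symmetry about $\tau=0$'' mechanism that is implicit in the paper's proof completely transparent. One small point you leave implicit: you should note that $u(t)\in C(t)$ for all $t$ automatically places $u$ in scenario (i), since $z(t)$ is the unique point of $C(t)$ maximizing $p$, so $p(u(t))<\eta(t)$ for $u\neq z$ throughout; this is needed to close the ``if'' direction of the equivalence. Other than that minor omission, the argument is complete and sound.
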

Bear in mind that $u(t)$, $z(t)$ remain in $\set{p>0}$, as explained in \S\ref{sec:equiv-under-refl}.
\begin{proof}
  Because $q(z(T/2))=0$ it follows that $p(z(T/2))=\pi^{-1/2}$ and hence every $u\in \mathscr{M}(z)$ must satisfy $p(u(t))\le p(z(t))$ for some $t$. In particular, a trajectory can either:
  \begin{enumerate}
  \item start with $p(u(0))\ge p(z(0))$ and enter later on,
  \item start with $p(u(0))\le p(z(0))$ and exit later on, or
  \item satisfy $p(u(t))\le p(z(t))$ for all $t\in [0,T]$.
  \end{enumerate}
  Let us suppose that $u(t)$ is not on the circular arc through $z(t)$. In cases (1) and (2), let $t_{\ast}$ be the time for which $p(u(t_{*}))=p(z(t_{*}))$.

  In case (1), we have that $t_{*}<T/2$ and:
  \begin{equation*}
    g(u)=-2\pi \int_{t_{*}}^{T}q(u(t))d t.
  \end{equation*}
  On $[t_{*},T]$, $u$ is a flow line for $pR+V$. Since $u(t_{*})$ lies above $C(t_{*})$ (because $u(0)$ lies above $C(0)$) it follows that:
  \begin{equation}\label{eq:estimate-u-v}
    g(u)<-2\pi \int_{t_{*}}^{T}q(v(t))d t,
  \end{equation}
  where $v:[0,T]\to D(1)$ is a trajectory for $pR+V$ which starts on the arc $C(0)$; indeed, one picks the trajectory $v$ so that $v(t_{*}+\tau)=u(t_{*})$ holds for some $\tau>0$, and the analysis in \S\ref{sec:scaling-factor-original} implies \eqref{eq:estimate-u-v}.

  Since $v(0)\in C(0)$, it holds that:
  \begin{equation*}
    -2\pi \int_{t_{*}}^{T}q(v(t))d t\le 0,
  \end{equation*}
  indeed, the integral over $[0,T]$ is zero, and the contribution due to $[0,t_{*}]$ is non-negative since $t_{*}\le T/2$. Therefore $g(u)<0$, as desired.

  The argument in case (2) is similar. In this case the exit time satisfies $t_{*}\ge T/2$, and:
  \begin{equation*}
    g(u)=-2\pi \int_{0}^{t_{*}}q(u(t))d t>-2\pi \int_{0}^{t_{*}}q(v(t))d t\ge 0,
  \end{equation*}
  for some trajectory $v:[0,T]\to D(1)$ which starts on $C(0)$.

  Finally, for case (3) it follows immediately from \S\ref{sec:scaling-factor-original} that $g(u)\ne 0$, since $u$ is a flow-line for $pR+V$ for all of $[0,T]$ and does not start on the set where the scaling-exponent is zero, namely $C(0)$. This completes the proof.
\end{proof}

\subsubsection{Limiting behaviour of the cut-off flow}
\label{sec:limit-behav-cut}

\begin{lemma}
  A sequence of flow-lines $u_{n}:[0,T]\to D(1)$ for $X_{n,t},$ as defined in \S\ref{sec:definition-cut-flow}, has a subsequence which converges in the $C^{0}$ topology to a solution $u\in \mathscr{M}(z)$ as $n\to\infty$.
\end{lemma}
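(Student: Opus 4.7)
The plan is to extract a $C^{0}$-convergent subsequence via Arzelà-Ascoli, establish condition (M2) on the complement of the switching set by passing to the limit in the ODE, and then prove the dichotomy (M1) by a regularity argument for the vertical displacement $p(u(t))-\eta(t)$ across the switching curve.

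Compactness and (M2). The fields $X_{n,t}=f_{n,t}(p)R+f'_{n,t}(p)V$ are uniformly bounded on $D(1)$: the contact Hamiltonians satisfy $|f_{n,t}|\le \pi^{-1/2}+1$, and $|f'_{n,t}|\le 1$ since the derivative of $\mu$ lies in $[0,1]$. Hence the $u_{n}$ are uniformly Lipschitz in $n$ and $t$, and Arzelà-Ascoli extracts a subsequence converging in $C^{0}$ to a continuous $u\colon[0,T]\to D(1)\cap\{p\ge 0\}$. Set $Z:=\{t:p(u(t))=\eta(t)\}$, a closed subset of $[0,T]$. On any compact subinterval $[\alpha,\beta]\subset Z^{c}$ the quantity $|p(u(t))-\eta(t)|$ is uniformly bounded below, so for large $n$ the trajectories $u_{n}$ stay in a compact region of $D(1)$ on which $X_{n,t}\to X_{t}$ uniformly (as noted at the end of \S\ref{sec:definition-cut-flow}). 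Passing to the limit in the integral equation $u_{n}(t)=u_{n}(\alpha)+\int_{\alpha}^{t}X_{n,s}(u_{n}(s))\,ds$ shows that $u$ is a flow line of $X_{t}$ on $Z^{c}$, which is (M2).

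Structure of $Z$. The critical observation is that $\phi(t):=p(u(t))-\eta(t)$ is of class $C^{1}$ on all of $[0,T]$, despite the discontinuity of $X_{t}$. Indeed, because $dp(V)=0$, both branches of $X_{t}$ yield the same expression $\dot p(u)=-2\pi p(u)q(u)$ on $Z^{c}$; combined with $\dot\eta=-2\pi\eta\,q(z)$, this means
\begin{equation*}
\phi'(t)=2\pi\bigl(\eta(t)q(z(t))-p(u(t))q(u(t))\bigr)
\end{equation*}
extends continuously across $Z$, and a standard mean-value argument upgrades $\phi$ to $C^{1}$. At a point $t_{*}\in Z$ this specializes to $\phi'(t_{*})=2\pi\eta(t_{*})(q(z(t_{*}))-q(u(t_{*})))$, which vanishes precisely when $u(t_{*})=z(t_{*})$ (since both points share the $p$-coordinate $\eta(t_{*})$).

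Dichotomy. If $u(t)\ne z(t)$ for every $t$, then $\phi'$ is nowhere zero on $Z$, so each $t_{*}\in Z$ is a transverse zero of $\phi$, hence isolated; by compactness $Z$ is finite and (M1) holds. Otherwise pick $t_{0}$ with $u(t_{0})=z(t_{0})\in \bd D(1)$. Because $V$ vanishes on $\bd D(1)$, both branches of $X_{t}$ restrict to the same smooth tangential vector field there, whose unique integral curve through $z(t_{0})$ is $z$ itself; a short argument using that flow lines of $X_{t}$ through points of $\bd D(1)$ remain on $\bd D(1)$ forces $u=z$ on a neighborhood of $t_{0}$. Hence $\{t:u(t)=z(t)\}$ is both open and closed, so $u\equiv z$ on $[0,T]$, again giving (M1). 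The main obstacle in the proof is this structural analysis of $Z$: both the $C^{1}$-regularity of $\phi$ across the jump of $X_{t}$ and the uniqueness claim at contact points with $z$ depend on the two algebraic properties $dp(V)=0$ and $V|_{\bd D(1)}=0$ peculiar to the cut-off vector field.
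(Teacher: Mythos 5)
Your overall strategy is the same as the paper's (Arzel\`a-Ascoli, then use the function $p(u(t))-\eta(t)$ to control the crossing set $Z$, then a boundary-confinement/uniqueness argument for the degenerate case $u\equiv z$), and the key formula $\phi'(t_{*})=2\pi\eta(t_{*})(q(z(t_{*}))-q(u(t_{*})))$ at crossings is correct. However, there are three concrete problems in the write-up.

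First, the claim that ``because $\d p(V)=0$, both branches of $X_{t}$ yield the same expression $\dot p(u)=-2\pi p(u)q(u)$'' is false. The branch $p<\eta$ gives $\d p(pR+V)=-2\pi pq$, but the branch $p>\eta$ gives $\d p(\eta R)=-2\pi\eta q$; the $V$-contribution drops out on both, as you say, but the $R$-coefficient differs ($p$ versus $\eta$). The two expressions agree only on the locus $p=\eta$, and it is precisely this agreement at $Z$ (not a global identity of formulas) that makes a continuous extension of $\phi'$ possible. The final expression for $\phi'(t_{*})$ happens to be unaffected because one evaluates at $p(u(t_{*}))=\eta(t_{*})$.

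Second, the ``standard mean-value argument'' upgrading $\phi$ to $C^{1}$ across $Z$ is not as routine as presented, because at this stage you have no structural information about $Z$: it is merely a closed subset of $[0,T]$, and the mean-value/l'H\^opital argument for differentiability at a point where the derivative has a limit requires (at least) that $Z$ have empty interior or measure zero, which has not been established. The paper sidesteps this by working directly with the smooth approximants: $E_{n}(t):=p(u_{n}(t))-\eta(t)$ is $C^{1}$ for each $n$, $E_{n}'=F_{n}$ with $F_{n}\to F$ uniformly, and $E_{n}\to E$ uniformly, so $E$ is $C^{1}$ with $E'=F$ \emph{everywhere} with no case analysis. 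You should replace the mean-value step with this uniform-convergence-of-derivatives argument.

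Third, the dichotomy argument at a contact time $t_{0}$ with $u(t_{0})=z(t_{0})\in\bd D(1)$ is circular as stated: you invoke ``flow lines of $X_{t}$ through points of $\bd D(1)$ remain on $\bd D(1)$'' to force $u\equiv z$ near $t_{0}$, but $t_{0}\in Z$ and you have only established that $u$ is a flow line of $X_{t}$ on $Z^{c}$, so you cannot yet treat $u$ as an integral curve of $X_{t}$ near $t_{0}$. The paper derives the needed boundary confinement from a Gronwall estimate on the approximating trajectories: $\ell(p,q)=1-\pi(p^{2}+q^{2})$ satisfies $|\ud{}{t}\ell(u_{n})|\le C\ell(u_{n})$ along $u_{n}$ (since $\d\ell(R)=0$ and $\d\ell(V)$ vanishes to first order at $\bd D(1)$), so $\ell(u_{n}(t))\le e^{C|t-t_{*}|}\ell(u_{n}(t_{*}))$, which passes to the limit and shows $u(t_{*})\in\bd D(1)$ implies $u(t)\in\bd D(1)$ for all $t$. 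Only after that is differentiability of $u$ along the boundary established, from which uniqueness gives the dichotomy $u\equiv z$ or $u\ne z$ everywhere. You need an argument of this type at the level of the $u_{n}$, not the limit.
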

\begin{proof}
  To begin, observe that the vector fields $X_{n,t}(p,q)$ are uniformly bounded for $(p,q)\in D(1)$ and $t\in [0,T]$. Therefore, by the Arzelà-Ascoli theorem, $u_{n}$ has a subsequence which converges in $C^{0}$ to a limiting continuous map $u$. It remains to show that $u\in \mathscr{M}(z)$.

  Let $X_{t}$ be the discontinuous limit of $X_{n,t}$ as in \S\ref{sec:definition-cut-flow}.

  First we claim the following: {\itshape if $u(t_{*})\in \bd D(1)$ holds for some time $t_{*}$, then $u(t)\in \bd D(1)$ for all times $t\in [0,T]$.} The claim is proved by proving a differential inequality for the function $$\ell(p,q)=1-\pi(p^{2}+q^{2}).$$ Indeed, along any flow line of $X_{n,t}$, we have:
  \begin{equation*}
    \d\ell(X_{n,t})=f_{n,t}'(p)\d\ell(\bd_{q})\ell;
  \end{equation*}
  see \S\ref{sec:projection-pq-plane}. Since $|f_{n,t}'(p)|$ is bounded by $1$ (by construction), and $|\d\ell(\bd_{q})|$ is bounded by some constant $C$, we have:
  \begin{equation*}
    \abs{\ud{}{t}\ell}\le C\ell,
  \end{equation*}
  along any flow line. This differential inequality integrates to:
  \begin{equation}\label{eq:exponential-estimate}
    \ell(u_{n}(t))\le e^{C\abs{t-t_{*}}}\ell(u_{n}(t_{*})).
  \end{equation}
  Our claim then follows from \eqref{eq:exponential-estimate} and the convergence of $u_{n}$ to $u$.
  
  Suppose that $u(t_{*})\in \bd D(1)$ holds for some time $t_{*}$. Given $\epsilon$, there is a small neighbourhood of $(t_{*},u(t_{*}))$ in $[0,T]\times D(1)$ so that:
  \begin{equation*}
    \abs{X_{n,t}(v)-X_{t_{*}}(u(t_{*}))}\le \epsilon
  \end{equation*}
  for $n$ sufficiently large and $(t,v)$ in this neighbourhood. This is because the discontinuous term in $X_{t}$ vanishes along the boundary; see \S\ref{sec:disc-degen-cut}. Thus, for $h$ small enough, we have:
  \begin{equation*}
    \frac{u(t_{*}+h)-u(t_{*})}{h}=\lim_{n\to\infty}\int_{0}^{1}X_{n}(u_{n}(t_{*}+\tau h))d \tau=X_{t_{*}}(u(t_{*}))+O(\epsilon),
  \end{equation*}
  and hence $u$ is differentiable at $t_{*}$ and $u'(t_{*})=X_{t_{*}}(u(t_{*}))$. In particular, if $u(t_{*})\in \bd D(1)$ for any time $t_{*}$, then either $u(t)=z(t)$ for all times $t$ or $u(t)\ne z(t)$ for all times $t$. Thus we may suppose that $u(t)\ne z(t)$ for every $t\in[0,T]$ (noting that if $u(t)=z(t)$ holds for all $t$ then \ref{M1} and \ref{M2} are trivially satisfied).
  
  To show \ref{M1} we need to prove that $p(u(t))=\eta(t)$ holds for only finitely many times $t$. To do so, we introduce the function: $$E_{n}(t)=p(u_{n}(t))-\eta(t).$$ For any $n$, we have from \eqref{eq:flow-projection} and \eqref{eq:pw-fdelta} that:
  \begin{equation*}
    \ud{}{t}E_{n}(t)=F_{n}(t):=2\pi q(z(t))\eta(t)-2\pi q(u_{n}(t))f_{\delta_{n},\eta(t)}(p(u_{n}(t))).
  \end{equation*}
  In the limit $n\to\infty$ the right hand side converges uniformly to the continuous function:
  \begin{equation*}
    F(t)=2\pi q(z(t))\eta(t)-2\pi q(u(t))\max\set{p(u(t)),\eta(t)}.
  \end{equation*}
  Since $E_{n}(t)$ converges uniformly to the limit $E(t):=p(u(t))-\eta(t)$ and $F_{n}(t)$ converges uniformly to $F(t)$, it follows that $E(t)$ is continuously differentiable and $E'(t)=F(t)$. If $E(t_{*})=0$, it then holds that:
  \begin{equation*}
    E'(t_{*})=2\pi \eta(t)(q(z(t))-q(u(t))).
  \end{equation*}
  Since $u(t_{*})\ne z(t_{*})$ it holds that $E'(t_{*})\ne 0$; in other words, the level set $\set{t:E(t)=0}$ is transverse and is a finite set. This proves \ref{M1}.

  The second property \ref{M2} is established as follows. Pick a time $t_{*}$ so that $E(t_{*})\ne 0$. By continuity, $p(u_{n}(t))$ remains a uniform distance away from from $\eta(t)$ for $n$ large enough and for $t$ in some interval around $t_{*}$. Thus, for $t$ near $t_{*}$, we may assume that $(t,u_{n}(t))$ lies in a region of $[0,T]\times D(1)$ where $X_{n,t}$ converges uniformly to $X_{t}$. It follows that the continuous limit $u(t)$ is a flow-line for $X_{t}$ away from the zero set of $E$. Therefore property \ref{M2} is satisfied, as desired.
\end{proof}

\subsubsection{Scaling factor for the cut-off flow}
\label{sec:scaling-factor-cut}

In this section we analyze the set where the cut-off flow has scaling factor equal to $1$. The idea is fairly simple: we will use the convergence result of \S\ref{sec:limit-behav-cut} with \S\ref{sec:scal-fact-piec} to prove that the cut-off flow has a scaling-factor-$1$-set which is well-approximated by the circle $C(0)$.

Let $X_{n,t}$ be the cut-off contact vector field and let $\varphi_{n,t}$ be its contact isotopy as in \S\ref{sec:definition-cut-flow}. We suppose that $z:[0,T]\to \bd D(1)$ is chosen so that $q(z(T/2))=0$. Then the circle $C(t)$ passing through $z(t)$ which is orthogonal to $\bd D(1)$ satisfies:
\begin{enumerate}
\item $C(0)$ is the scaling-factor-1 set for the original flow $\varphi_{t}$,
\item $C(t)=\varphi_{t}(C(0))=\varphi_{n,t}(C(0))$.
\end{enumerate}
The fact that $\varphi_{t}(C(0))=\varphi_{n,t}(C(0))$ holds because $\varphi_{t}(C(0))$ always remains in the region where the cut-off vector field agrees with the original vector field $pR+V$. The fact that $C(t)=\varphi_{t}(C(0))$ holds because $\varphi_{t}$ is a M\"obius transformation of the disk, as shown in \S\ref{sec:exact-solution}.

The goal in this section is to prove:
\begin{lemma}
  For given $T>0$ and $\epsilon>0$, the number $n$ can be chosen large enough that the scaling-factor-1 set $\Sigma_{n,T}$ for $\varphi_{n,T}$ is such that its images $\varphi_{n,T}(\Sigma_{n,T})$ are $\epsilon$-close to $C(T)$.
\end{lemma}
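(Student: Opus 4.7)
The plan is to argue by contradiction, combining the compactness result of \S\ref{sec:limit-behav-cut} with the characterization of the scaling-factor-$1$ set for the piecewise flow given in \S\ref{sec:scal-fact-piec}. The overall idea is that a sequence of points in $\Sigma_{n,T}$ whose images remain a definite distance from $C(T)$ would produce, in the limit, a piecewise trajectory $u\in\mathscr{M}(z)$ with scaling exponent $0$ but not lying on $C(t)$, contradicting the lemma in \S\ref{sec:scal-fact-piec}.

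Suppose the conclusion fails. Then after extracting a subsequence there exist $x_{n}\in\Sigma_{n,T}$ such that the projections $u_{n}(t)$ of $\varphi_{n,t}(x_{n})$ to $D(1)$ satisfy $\mathrm{dist}(u_{n}(T),C(T))\ge\epsilon$. By the reflection equivariance of \S\ref{sec:equiv-under-refl} one may assume $u_{n}$ lies in $\set{p\ge 0}$. First I would apply the compactness lemma of \S\ref{sec:limit-behav-cut} to extract a further subsequence along which $u_{n}\to u$ in the $C^{0}$ topology, with $u\in\mathscr{M}(z)$.

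The key technical step is to show that the scaling exponents converge, namely $g_{n}(x_{n})\to g(u)$. A direct computation shows that $X_{n,t}$ has conformal factor $-2\pi q\,f_{n,t}'(p)$ on $\bd B(1)$, so
\begin{equation*}
  g_{n}(x_{n})=-2\pi\int_{0}^{T}q(u_{n}(t))\,f_{n,t}'(p(u_{n}(t)))\,\d t.
\end{equation*}
From the definition of $f_{\delta_{n},\eta(t)}$, the factor $f_{n,t}'(p)$ lies in $[0,1]$, equals $1$ for $p\le\eta(t)$, and vanishes for $p\ge\eta(t)+\delta_{n}$. By the lemma in \S\ref{sec:piecewise-trajectories} the set $\set{t:p(u(t))=\eta(t)}$ is either all of $[0,T]$ (when $u=z$, in which case $g(u)=0$ follows directly from the analysis in \S\ref{sec:scaling-factor-original}) or consists of a single crossing time. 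Splitting $[0,T]$ into an arbitrarily small neighbourhood of this crossing time and its complement $K$, one has $\abs{p(u(t))-\eta(t)}\ge c>0$ on $K$, so the uniform convergence $u_{n}\to u$ forces $\abs{p(u_{n}(t))-\eta(t)}\ge c/2>\delta_{n}$ on $K$ for large $n$, and therefore $f_{n,t}'(p(u_{n}(t)))$ agrees exactly with the indicator of $\set{p(u(t))\le\eta(t)}$ there. The residual contribution from the neighbourhood of the crossing time is uniformly bounded and supported on an arbitrarily small interval, so letting $n\to\infty$ yields $g_{n}(x_{n})\to g(u)$ in the sense of \S\ref{sec:scal-fact-piec}.

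Because $x_{n}\in\Sigma_{n,T}$ we have $g_{n}(x_{n})=0$, hence $g(u)=0$. Under the standing normalization $q(z(T/2))=0$, the lemma of \S\ref{sec:scal-fact-piec} then forces $u(t)\in C(t)$ for all $t$, so in particular $u(T)\in C(T)$. But $u_{n}(T)\to u(T)$ in $D(1)$, contradicting $\mathrm{dist}(u_{n}(T),C(T))\ge\epsilon$. I expect the main obstacle to be the convergence of the scaling exponents: the cut-off factor $f_{n,t}'$ concentrates its variation in a $\delta_{n}$-wide window around $p=\eta(t)$, and one needs to certify that along the limit trajectory this degenerate region is crossed at only finitely many times --- this is precisely what the piecewise-trajectory lemma of \S\ref{sec:piecewise-trajectories} provides.
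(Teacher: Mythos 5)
Your argument is correct and follows essentially the same route as the paper's proof: argue by contradiction, apply the $C^{0}$-compactness lemma of \S\ref{sec:limit-behav-cut} to extract a limit $u\in\mathscr{M}(z)$, show that the scaling exponents converge by isolating the (at most one) crossing time and controlling the residual contribution on a shrinking interval, and then invoke the lemma of \S\ref{sec:scal-fact-piec} to derive a contradiction. The only stylistic difference is that you phrase the convergence of $g_{n}(x_{n})\to g(u)$ via a threshold $c$ on the complement of the crossing window, while the paper packages the same estimate as $O(\rho_{2})+o(1)$ with $\rho_{2}$ arbitrary; the substance is identical.
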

\begin{proof}
  Recall that $\varphi_{n,T}^{*}\alpha=e^{g_{n,T}}\alpha$ where:
  \begin{equation*}
    g_{n,T}(x)=\int \d (f_{n,t}(p))(R) dt=\int f_{n,t}'(p)\d p(R)dt =-2\pi\int  f'_{n,t}(p)q dt,
  \end{equation*}
  where the integral is along the flow line for $X_{n,t}$ starting at $x$.

  Let us argue by contradiction; if the images of the scaling-factor-1 set do not concentrate near $C(t)$ as $n\to\infty$, then we can find a sequence of flow lines $u_{n}(t)$ so that $u_{n}(t)$ becomes at least $\epsilon$ far from $C(t)$ (for some $t$) and so that $g_{n,T}(u_{n}(0))=0$.
   
  Using the result of \S\ref{sec:limit-behav-cut}, we can pass to a subsequence so that $u_{n}$ converges uniformly to a limit $u\in \mathscr{M}(z)$. It is clear that $u\ne z$, and hence $u$ has an isolated crossing with the set $p=\eta(t)$. For any $\rho_{2}>0$, there is $0<\rho_{1}<\rho_{2}$ and an interval $I$ of length $\rho_{2}$ so that $u(t)$ is $\rho_{1}$ far from $\set{p=\eta(t)}$ outside $I$.

  By the $C^{0}$ convergence of $u_{n}$ to $u$, it follows that $u_{n}$ remains $\rho_{1}/2$ far from $\set{p=\eta(t)}$ outside of $I$, for $n$ large enough.

  Observe that $F_{n}(t,p)=-2\pi f_{n,t}'(p)q$ converges uniformly to:
  \begin{equation*}
    F(t,p)=\left\{
      \begin{aligned}
        &-2\pi q&&\text{ if }p\le \eta(t)\\
        &0&&\text{ if }p> \eta(t)
      \end{aligned}
    \right.
  \end{equation*}
  on compact subsets of the complement of the locus $\set{p=\eta(t)}$. Since:
  \begin{equation*}
    (t,u_{n}(t))
  \end{equation*}
  remains in such a compact set for $t\not\in I$ (the compact set is the complement of the $\rho_{1}/2$ neighbourhood around $\set{p=\eta(t)}$) we conclude that:
  \begin{equation*}
    \int_{0}^{T}\abs{F_{n}(t,u_{n}(t))-F(t,u(t))}d t-\int_{I}\abs{F_{n}(t,u_{n}(t))-F(t,u(t))}d t,
  \end{equation*}
  is $o(1)$ as $n\to\infty$. However, since $F_{n},F$ are bounded from above by a constant (independently of $n$), we conclude that:
  \begin{equation*}
    \int_{0}^{T}\abs{F_{n}(t,u_{n}(t))-F(t,u(t))}d t=O(\rho_{2})+o(1)\text{ as }n\to\infty,
  \end{equation*}
  where we use that $\abs{I}=\rho_{2}$. Since $\rho_{2}$ was arbitrary, we conclude that
  \begin{equation*}
    \lim_{n\to\infty}\int_{0}^{T}\abs{F_{n}(t,u_{n}(t))-F(t,u(t))}d t=0.
  \end{equation*}
  In particular, $\int_{0}^{T}F(t,u(t))dt=0$, since $\int_{0}^{T}F_{n}(t,u_{n}(t))dt=0$.

  This implies by \S\ref{sec:scal-fact-piec} that $u(t)\in C(t)$, contradicting the $C^{0}$ convergence of $u_{n}$ to $u$. This contradiction completes the proof.
\end{proof}

\subsection{Displacing the set where the scaling factor is 1}
\label{sec:displacing-set-where}

Let $\varphi_{n,t}$ be the contact isotopy constructed in \S\ref{sec:definition-cut-flow}; suppose also that the flow line $z:[0,T]\to \bd D(1)$ used in its definition is such that $q(z(T/2))=0$.
By taking $T$ sufficiently large, we may suppose that:
\begin{enumerate}
\item $C(0)$ lies in a small neighbourhood $U_{-}$ of $-i\pi^{-1/2}$ and $C(T)$ lies in a small neighbourhood $U_{+}$ of $i\pi^{-1/2}$.
\end{enumerate}
Then by \S\ref{sec:definition-cut-flow} and \S\ref{sec:scaling-factor-cut} we can take $n$ large enough that:
\begin{enumerate}[resume]
\item the length of $\varphi_{n,t}$, for $t\in [0,T]$, is as close to $1/2$ as desired,
\item the set $\Sigma_{n}$ where $\varphi_{n,t}^{*}\alpha=\alpha$ satisfies that the projection of $\varphi_{n,t}(\Sigma_{n})$ to $D(1)$ is as close to the travelling circular arc $C(t)$ as desired.
\end{enumerate}
These two properties are achieved by taking $n$ sufficiently large.

Fix now a small number $\epsilon$. We claim:
\begin{lemma}
  There exists a strict contact isotopy $\kappa_{t}$ of $\bd B(1)$ whose Shelukhin-Hofer length is at most $\epsilon/4$ and whose time $1$ map sends the point lying above $i\pi^{-1/2}$ into the fiber over $p=q=0$.
\end{lemma}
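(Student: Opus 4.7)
The plan is to reduce the lemma to constructing a Hamiltonian isotopy on the Reeb quotient $\C P^{n} = \bd B(1)/S^{1}$ with small max-norm Hofer length. A strict contact isotopy $\kappa_{t}$ of $\bd B(1)$ is generated by a Reeb-invariant contact Hamiltonian $H_{t}$, which descends to a function $\tilde H_{t}$ on $\C P^{n}$; the induced flow on the quotient is the Hamiltonian flow of $\tilde H_{t}$ with respect to the Fubini-Study form, and since $H_{t}=\tilde H_{t}\circ \pi$ for $\pi:\bd B(1)\to \C P^{n}$ the Reeb quotient projection, the identity
\begin{equation*}
  \mathrm{length}_{\alpha}(\kappa_{t}) = \int_{0}^{1} \max_{\bd B(1)} \abs{H_{t}} \d t = \int_{0}^{1} \max_{\C P^{n}} \abs{\tilde H_{t}} \d t
\end{equation*}
holds. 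The point $(0,\pi^{-1/2},0,\ldots,0)$ projects to $A:=[1:0:\cdots:0]$, and the fiber over $p=q=0$ is exactly $\pi^{-1}(\Lambda)$ where $\Lambda:=\set{z_{0}=0}$. So it suffices to find a Hamiltonian isotopy of $(\C P^{n},\omega_{FS})$ with Hofer length at most $\epsilon/4$ whose time-$1$ map carries $A$ into $\Lambda$.

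For this I would invoke the classical fact that the max-norm Hofer pseudo-distance between any two points of a compact symplectic manifold vanishes, realized via a traveling-bump Hamiltonian. Choose a smooth embedded path $\gamma:[-1,2]\to \C P^{n}$ with $\gamma(0)=A$ and $\gamma(1)=B:=[0:1:0:\cdots:0]\in \Lambda$. Being one dimensional, $\gamma$ is automatically isotropic, and Weinstein's isotropic neighborhood theorem provides a symplectomorphism from a tubular neighborhood of $\gamma$ in $\C P^{n}$ onto a neighborhood of $[-1,2]\times \set{0}$ in $(\R\times \R^{2n-1},\,\d s\wedge \d y_{1}+\sum_{j\ge 2}\d x_{j}\wedge \d y_{j})$, identifying $\gamma(s)$ with $(s,0)$. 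In this chart, for $\delta>0$, define
\begin{equation*}
  \tilde H_{t}(s,y_{1},x_{2},y_{2},\ldots,x_{n},y_{n}) = y_{1}\, \rho(s-t)\, \chi\!\left(\tfrac{1}{\delta}(y_{1},x_{2},y_{2},\ldots,x_{n},y_{n})\right),
\end{equation*}
where $\rho$ is a smooth bump on $\R$ with $\rho(0)=1$ and support in $[-1/2,1/2]$, and $\chi$ is a smooth bump on $\R^{2n-1}$ with $\chi=1$ near the origin and support in the unit ball. A direct Hamilton's-equations calculation ($\dot s=\partial_{y_{1}}\tilde H_{t}$, $\dot y_{1}=-\partial_{s}\tilde H_{t}$, etc.) shows that the trajectory starting at the origin of the chart satisfies $s(t)=t$, $y_{1}\equiv 0$, and $(x_{j},y_{j})\equiv 0$, arriving at $(1,0,\ldots,0)$---i.e.~at $B$---at time $t=1$. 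On the support of $\tilde H_{t}$ one has $\abs{y_{1}}\le \delta$, so $\max \abs{\tilde H_{t}}\le \delta$; choosing $\delta\le \epsilon/4$ delivers the required length bound.

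To assemble, extend $\tilde H_{t}$ by zero outside the Weinstein neighborhood (legitimate since the bumps have compact support inside the chart) and pull back by $\pi$ to obtain a Reeb-invariant contact Hamiltonian $H_{t}$ on $\bd B(1)$. The strict contact isotopy $\kappa_{t}$ it generates has Shelukhin-Hofer length at most $\epsilon/4$, and by $S^{1}$-equivariance its time-$1$ map sends the fiber over $A$ onto the fiber over $B\in \Lambda$, which lies in $\set{p=q=0}$. The main delicacy is the traveling-bump ODE verification in the Weinstein chart, together with the construction of the chart itself via the isotropic neighborhood theorem; both are standard pieces of symplectic topology, so the real content of the lemma is simply the observation that the problem can be descended to $\C P^{n}$ where point-to-hyperplane motion is achievable with arbitrarily small Hofer norm.
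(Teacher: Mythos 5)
Your argument is correct but takes a genuinely different route from the paper. The paper reduces to the same problem on $\mathbb{C}P^{n}$ (descend the point and the hyperplane, find a small-Hofer-norm Hamiltonian isotopy, lift it back), but then simply cites Usher's Theorem 1.6 from \cite{usher-JEMS-2014}, which asserts that the (max-norm) Hofer pseudo-distance from a point to a closed hypersurface (more generally to a suitable coisotropic) vanishes. You instead prove by hand the weaker fact that the Hofer pseudo-distance between \emph{two points} of $\mathbb{C}P^{n}$ is zero, via the classical traveling-bump Hamiltonian in a Weinstein tubular neighborhood of an embedded arc, and observe that this is enough since any point $B\in\Lambda$ will do. Your ODE check is sound: at $y_{1}=0$, $(x_{j},y_{j})=0$ the only nonvanishing component of the Hamiltonian vector field is $\dot s=\rho(s-t)$, which has $s(t)=t$ as its unique solution, and $\max\abs{\tilde H_{t}}\le\delta$ because $\abs{y_{1}}\le\delta$ on the support. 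You also correctly handle the lifting step (Reeb-invariant contact Hamiltonian $\Rightarrow$ strict contact isotopy, and the $\sup$-norms agree). The trade-off: the paper's proof is a two-line citation, while yours is self-contained and elementary but must invoke the isotropic neighborhood theorem and carry out the traveling-bump verification; the paper's citation to Usher is also conceptually broader (hypersurfaces rather than points), but that generality is not needed here. One small caution you should make explicit: the sign in Hamilton's equations depends on conventions, but this is immaterial since replacing $\tilde H_{t}$ by $-\tilde H_{t}$ preserves the Hofer norm.
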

\begin{proof}
Apply \cite[Theorem 1.6]{usher-JEMS-2014} to the point: $$A=[i\pi^{-1/2}:0:\dots:0]\in \mathbb{C}P^{n}=\bd B(1)/(\R/\Z)$$ to conclude a Hamiltonian isotopy with Hofer norm less than $\epsilon/4$ which sends $A$ to some point in the hyperplane divisor lying above $p=q=0$. Then use the fact that Hamiltonian isotopies of $\mathbb{C}P^{n}$ with small Hofer norm lift to strict contact isotopies of $S^{2n+1}$ with small Shelukhin-Hofer norm, as shown in, e.g., \cite[pp.\,1192]{shelukhin_contactomorphism}.
\end{proof}
Fix such an isotopy $\kappa_{t}$. By picking $T$ large enough, we may suppose that:
\begin{equation}\label{eq:assump-1}
  \kappa_{1}(\mathrm{pr}^{-1}(U_{+}))\subset \mathrm{pr}^{-1}(D(\epsilon))
\end{equation}
where the disk $D(\epsilon)$ is disjoint from $U_{-}$; here $\mathrm{pr}:\bd B(1)\to D(1)$ is the projection to the $p$-$q$-plane. We also pick $n$ large enough that:
\begin{equation}\label{eq:assump-2}
  \text{$\Sigma_{n}\subset \mathrm{pr}^{-1}(U_{-})$ and $\varphi_{n,T}(\Sigma_{n})\subset \mathrm{pr}^{-1}(U_{+})$.}
\end{equation}
Introduce $\gamma_{t}=\varphi_{n,tT}$. We claim that the isotopy $\psi_{t}=\kappa_{t}\gamma_{t}$ has a time-1 map without translated points, and has length at most $(1+\epsilon)/2$.

Indeed, the scaling factor $1$ set for $\psi_{1}$ is the same as the scaling factor $1$ set for $\varphi_{n,T}$, namely $\Sigma_{n}$, since $\kappa_{t}$ is strict. By \eqref{eq:assump-1}, \eqref{eq:assump-2} and the fact that there are no Reeb flow lines joining $\mathrm{pr}^{-1}(U_{-})$ to $\mathrm{pr}^{-1}(D(\epsilon))$, since the Reeb vector field is rotational, it follows that there are no Reeb flow lines joining $\Sigma_{n}$ to $\psi_{1}(\Sigma_{n})$. Thus $\psi_{1}$ has no translated points.

The contact Hamiltonian generating $\kappa_{t}\gamma_{t}$ is $k_{t}+Th_{n,tT}\circ \kappa_{t}^{-1},$ where $k_{t},h_{n,t}$ are the contact Hamiltonians for $\kappa_{t}$ and $\varphi_{n,t}$; this easily yields the bound on the length of $\psi_{t}$ provided the length of $\varphi_{n,t}$ is less than $1/2+\epsilon/4$. This completes the proof of Theorem \ref{theorem:main}.

\bibliography{citations}

\begin{thebibliography}{GKPS21}

\bibitem[AA23]{allais-arlove}
S.~Allais and P.-A. Arlove.
\newblock Spectral selectors and contact orderability.
\newblock arXiv:2309.10578, 2023.

\bibitem[AF10]{albers-frauenfelder-j-topol-anal-2010}
P.~Albers and U.~Frauenfelder.
\newblock Leaf-wise intersections and {R}abinowitz {F}loer homology.
\newblock {\em Journal of Topology and Analysis}, 2(1):77--98, 2010.

\bibitem[All22a]{allais_zoll}
S.~Allais.
\newblock {M}orse estimates for translated points on unit tangent bundles.
\newblock arXiv:2205.13946, 2022.
\newblock À paraître dans Ann. Inst. Fourier.

\bibitem[All22b]{allais_lens}
S.~Allais.
\newblock On the minimal number of translated points in contact lens spaces.
\newblock {\em Proceedings of the American Mathematical Society},
  150:2685--2693, 2022.

\bibitem[AM13]{albers_merry}
P.~Albers and W.~J. Merry.
\newblock Translated points and {R}abinowitz {F}loer homology.
\newblock {\em J. Fixed Point Theory Appl.}, 13:201--214, 2013.

\bibitem[Can23]{cant_sh_barcode}
D.~Cant.
\newblock {S}helukhin's {H}ofer distance and a symplectic cohomology barcode
  for contactomorphisms.
\newblock arXiv:2309.00529, September 2023.

\bibitem[Can24]{cant-JSG-2024}
D.~Cant.
\newblock Contactomorphisms of the sphere without translated points.
\newblock {\em J. Symplectic Geom.}, 22(1):1--10, 2024.

\bibitem[DUZ23]{djordjevic_uljarevic_zhang}
D.~Djordjevi{\'c}, I.~Uljarevi{\'c}, and J.~Zhang.
\newblock Quantitative characterization in contact {H}amiltonian dynamics -
  {I}.
\newblock arXiv:2309.00527, 2023.

\bibitem[Giv90]{givental-nl-maslov-advsov}
A.B. Givental.
\newblock Nonlinear generalization of the {M}aslov index.
\newblock In {\em Theory of singularities and its applications}, volume~1 of
  {\em Adv. Sov. Math.}, pages 71--103. AMS, 1990.

\bibitem[Giv91]{givental-nl-maslov-LMS}
A.B. Givental.
\newblock The nonlinear {M}aslov index.
\newblock In {\em Geometry of low-dimensional manifolds}, Lecture Note Series,
  pages 35--43. London Math. Soc., 1991.

\bibitem[GKPS21]{granja-karshon-pabiniak-sandon}
Gustavo Granja, Yael Karshon, Milena Pabiniak, and Sheila Sandon.
\newblock {G}ivental's non-linear {M}aslov index on {L}ens spaces.
\newblock {\em Int. Math. Res. Not.}, 2021(23):18225--18299, 2021.

\bibitem[MU19]{merry_ulja}
W.~J. Merry and I.~Uljarevi\'c.
\newblock Maximum principles in symplectic homology.
\newblock {\em Israel J. Math.}, 229:39--65, 2019.

\bibitem[Oh21]{oh_legendrian_entanglement}
Y-G. Oh.
\newblock Geometry and analysis of contact instantons and entanglement of
  {L}egendrian links {I}.
\newblock arXiv:2111.02597, 2021.

\bibitem[Oh22]{oh_shelukhin_2}
Y-G. Oh.
\newblock Contact instantons, anti-contact involution and proof of
  {S}helukhin's conjecture.
\newblock arXiv:2209.03548, 2022.

\bibitem[San11a]{sandon-ann-inst-four-2011}
S.~Sandon.
\newblock Contact homology, capacity and non-squeezing in {$\R^{2n}\times S^1$}
  via generating functions.
\newblock {\em Ann. Inst. Fourier}, 61(1):145--185, 2011.

\bibitem[San11b]{sandon-equivariant-JSG-11}
S.~Sandon.
\newblock Equivariant homology for generating functions and orderability of
  lens spaces.
\newblock {\em J. Symp. Geom.}, 9(2):123--146, 2011.

\bibitem[San12]{sandon-int-j-math-2012}
S.~Sandon.
\newblock On iterated translated points for contactomorphisms of
  $\mathbb{R}^{2n+1}$ and $\mathbb{R}^{2n}\times \mathrm{S}^1$.
\newblock {\em Int. J. Math.}, 23(2), 2012.

\bibitem[San13]{sandon-geom-dedicata-2013}
S.~Sandon.
\newblock A {M}orse estimate for translated points of contactomorphisms of
  spheres and projective spaces.
\newblock {\em Geom. Dedicata}, 165:95--110, 2013.

\bibitem[She17]{shelukhin_contactomorphism}
E.~Shelukhin.
\newblock The {H}ofer norm of a contactomorphism.
\newblock {\em J. Symp. Geom.}, 15, 2017.

\bibitem[Ush14]{usher-JEMS-2014}
M.~Usher.
\newblock Submanifolds and the {H}ofer norm.
\newblock {\em J. Eur. Math. Soc.}, 16:1571--1616, 2014.

\end{thebibliography}
\bibliographystyle{alpha}
\end{document}